\documentclass[11pt]{article}

\usepackage[pagebackref,colorlinks=true,urlcolor=blue,linkcolor=red,citecolor=red]{hyperref}

\usepackage{epsfig, graphics, graphicx}
\usepackage{subcaption, comment, bm}
\usepackage[percent]{overpic}
\usepackage{float}
\usepackage{amssymb,bm}
\usepackage{amsthm}
\usepackage{color}
\usepackage[]{amsmath}
\usepackage[]{amsfonts}
\usepackage[]{fancyhdr}
\usepackage[]{graphicx,wrapfig}
\usepackage{enumitem}
\usepackage[utf8]{inputenc}
\usepackage[T1]{fontenc}
\usepackage{mathtools}
\usepackage{array}
\hoffset 0.0cm
\textwidth 16.0cm
\oddsidemargin 0.0cm
\evensidemargin 0.0cm

\usepackage{comment}
\usepackage{amsmath}
\usepackage{tikz}
\usepackage{epigraph}
\usepackage{lipsum}
\allowdisplaybreaks



\def\th@plain{%
  \thm@notefont{}
  \itshape 
}
\def\th@definition{%
  \thm@notefont{}
  \normalfont 
}
\makeatother

\usepackage{srcltx} 
\usepackage{amscd}
\usepackage{amssymb}
\usepackage{amsmath}
\usepackage{pb-diagram}
\usepackage{graphicx}
\usepackage{hyperref}
\usepackage{array}
\usepackage[all]{xy}
\xyoption{matrix}
\xyoption{arrow}
\usepackage{pdfsync}
\usepackage{physics}
\usepackage{enumitem}
 
\usepackage{thmtools}
\usepackage{amssymb}

\DeclareUnicodeCharacter{2212}{-,+}
\usepackage[none]{hyphenat}
\theoremstyle{definition}
\newtheorem{definition}{Definition}[section]
\newtheorem{theorem}[definition]{Theorem}

\newtheorem{remark}[definition]{Remark}

\newcommand{\Ic}{\mathcal{I}}
\newcommand{\Jc}{\mathcal{J}}

\newcommand{\Lc}{\mathcal{L}}
\newcommand{\Mc}{\mathcal{M}}

\newcommand{\Rc}{\mathcal{R}}

\newcommand{\Tc}{\mathcal{T}}

\newcommand{\vx}{\textbf{\textit{x}}}

\newcommand{\vw}{\textbf{\textit{w}}}
\newcommand{\vv}{\textbf{\textit{v}}}

\newcommand{\vf}{\textbf{\textit{f}}}

\newcommand{\D}{\mathrm{d}}
\newcommand{\DR}{\mathbb{D}_R}
\newcommand{\vuu}{\textbf{\textit{u}}}
\title{\vspace{-1cm} Inversion of generalized V-line transforms of vector fields in $\mathbb{R}^2$}

\author{Rahul Bhardwaj\thanks{Department of Mathematics, Indian Institute of Technology, Jammu, J $\&$ K - 181221, India. \url{2020rma2092@iitjammu.ac.in}} \and Rohit Kumar Mishra\thanks{Department of Mathematics, Indian Institute of Technology, Gandhinagar, Gujarat - 382355, India. \url{rohit.m@iitgn.ac.in}, \url{rohittifr2011@gmail.com}} \and Manmohan Vashisth\thanks{Department of Mathematics, Indian Institute of Technology, Ropar, Punjab - 140001,  India. \url{manmohanvashisth@iitrpr.ac.in}}}

\begin{document}
\maketitle
\begin{abstract}
This article studies the inverse problem of recovering a vector field supported in $\DR$, the disk of radius $R$ centered at the origin, through a set of generalized broken ray/V-line transforms, namely longitudinal and transverse V-line transforms. Geometrically, we work with broken lines that start from the boundary of a disk and break at a fixed angle after traveling a distance along the diameter. We derive two inversion algorithms to recover a vector field in $\mathbb{R}^2$ from the knowledge of its longitudinal and transverse V-line transforms over two different subsets of aforementioned broken lines in $\mathbb{R}^2$.
\end{abstract}
\section{Introduction}
The question of recovering a scalar function, a vector field, or, more generally, a tensor field from its integral transforms of some kind is vital in the field of imaging sciences. Typically, the integral transforms appearing in the field of integral geometry consist of longitudinal, transverse, mixed, and momentum ray transforms. These transforms integrate certain weighted projections of functions (or, more generally, tensor fields) along straight lines or other trajectories in $\mathbb{R}^n$. The inverse problem of interest is to study these integral transforms and develop methods to reconstruct the unknown tensor field from its integral transforms. In the straight-line case, there is a vast literature available in various settings with different combinations of these transforms, see \cite{Denisyuk_1994, Denisyuk_2006, Derevtsov_2015, Katsevich_2006, Mishra_2020, Rohit_Suman_2021, Chandni_Rohit,  Monard1,  Palamodov_2009, Schuster_2000,  Sharafutdinov_Book, Sharafutdinov_2007, Mishra_2019} and references therein. 

In recent years, the problem of recovering a scalar function from its integration over broken lines or "V"-shaped lines has been studied by many authors. Such integrals over V-lines occur naturally in the field of optical tomography, where one uses light transmitted and scattered through an object to determine the interior features of that object. Often, one takes the measurements on the boundary of that object to reconstruct the spatially varying coefficients of light absorption and scattering. Under reasonable assumptions, it can be assumed that the majority of photons change their flight direction only once inside the object (see \cite{Florescu2009, Florescu2010, Florescu2011}), which motivates the name broken ray/V-line transform. The mathematical problem of recovering a scalar function from its V-line transform is highly overdetermined because the family of all V-lines in a plane is four-dimensional, while our unknown function depends only on two variables. Therefore, it is natural to expect that we can reconstruct the unknown function from a two-dimensional subset of V-line transform data. In literature, two classes of V-line transforms have been studied. The first class consists of V-lines with vertices on the boundary of the image domain (see \cite{Terzioglu_2018} and the references therein) and has applications in image reconstruction problems using Compton cameras. The second class includes V-lines with vertices inside the support of the scalar function, and these appear in the field of single scattering tomography mentioned above 
\cite{Ambarsoumian_2012, amb-lat_2019, Amb_Lat_star, Ambartsoumian_2013, ambartsoumian2016numerical, Florescu-Markel-Schotland, Gouia_Amb_V-line, Kats_Krylov-13, Sherson, walker2019broken, ZSM-star-14}. For a detailed discussion on these generalized operators and literature survey, please refer to a recent book by Ambartsoumian \cite{amb-book}. 

This article focuses on a generalization of the V-line transform defined for vector fields in $\mathbb{R}^2$. The V-line transforms for vector fields and symmetric 2-tensor fields in $\mathbb{R}^2$ have been studied in recent works \cite{ Gaik_Latifi_Rohit, Gaik_Mohammad_Rohit, Gaik_Rohit_Indrani}. These works extended the notions of straight-line longitudinal, transverse, and momentum ray transforms of vector fields and symmetric 2-tensor fields to those integrating along V-lines. The authors of these articles derived several exact inversion formulas to recover a vector field and symmetric 2-tensor field from certain combinations of the transforms mentioned above. In addition to the V-line transform, the article \cite{Gaik_Mohammad_Rohit} also extended the notion of the star transform from functions to vector fields in $\mathbb{R}^2$, and an inversion formula was also derived for this extended operator. Motivated by these works, we considered the same problem of recovering a vector field in $\mathbb{R}^2$ in a different geometric setting. We used the setup introduced in \cite{Ambarsoumian_2012, Ambartsoumian_2013} for the scalar case, where the collimated (directionally focused) emitters and receivers are located on the boundary of a disk. Our results can be considered an extension of \cite{Ambarsoumian_2012, Ambartsoumian_2013} in the vector field setting.

The article is organized as follows: In Section \ref{sec:Definition and notations}, we introduce the necessary notations and definitions of the integral transforms. Section \ref{sec:main results} is devoted to stating the main results of the article and some discussion about them. The proofs of the main theorems are presented in Sections \ref{sec:full recovery th} and \ref{sec:partial data case}. Finally, we conclude the article with acknowledgments in Section \ref{sec:acknowledgements}.
 \section{Definition and notations}\label{sec:Definition and notations}
This starting section is devoted to introducing notations and definitions used throughout this article. The regular fonts are used to represent scalars or scalar-valued functions (such as $x_1$, $x_2$, $f$, $g$, etc), and the bold fonts are used to represent vectors or vector fields in $\mathbb{R}^2$ (such as $\vf$, $\vx$, $\vv$, etc).

The disc of radius $R$ centered at the origin is represented by $\mathbb{D}_R$, and its boundary is denoted as $\partial \mathbb{D}_R$. Let $C_0^\infty(S^1(\DR))$ be the spaces of vector fields with components in $C_0^\infty(\DR)$, the space of smooth functions with compact support in $\DR$. The well-known differential operators, such as the gradient of a scalar function $\varphi(x,y)$, the divergence and the curl operators of vector fields $\vf:= (f_1,f_2)$ are defined as follows:
\begin{itemize}
\item The gradient and orthogonal gradient operators $\mathrm{d},\D^\perp :C_0^\infty(\DR) \rightarrow  C_0^\infty\left(S^1(\mathbb{D}_R)\right)$ are defined as follows:
$$  \D \varphi := \left(\frac{\partial{\varphi}}{\partial x_1},\frac{\partial{\varphi}}{\partial x_2}\right) \quad \mbox{ and } \quad \D^\perp \varphi := \left(-\frac{\partial{\varphi}}{\partial x_2},\frac{\partial{\varphi}}{\partial x_1}\right).$$
\item The divergence and the orthogonal divergence operators $\delta, \delta^\perp:C_0^\infty\left(S^1(\mathbb{D}_R)\right) \rightarrow  C_0^\infty\left(\mathbb{D}_R\right)$ are defined as follows:
$$\delta \vf := \frac{\partial{f_1}}{\partial x_1}+\frac{\partial{f_2}}{\partial x_2} \quad \mbox{ and } \quad  \delta^\perp \vf := \frac{\partial{f_2}}{\partial x_1} - \frac{\partial{f_1}}{\partial x_2}.$$
\end{itemize}
The operator $\delta^\perp$ is sometimes known as the 2-dimensional curl of vector fields.

Let $\vf$ be a vector field supported in the disk $\DR$ and $\theta \in (0, \pi/2)$ be a fixed angle. Following \cite{Ambarsoumian_2012}, we denote by $BR(\beta, d)$ the broken ray that starts from the point $\vx_\beta = (R \cos \beta, R \sin \beta)$ on $\partial \DR$ and moves the distance $d$ in the radial direction $\vuu_\beta = -(\cos \beta, \sin \beta)$, then breaks into another ray under the obtuse angle $\pi - \theta$ and travels in the direction $\vv_\beta = -\left(\cos (\theta + \beta), \sin (\theta +\beta)\right)$ (see Figure \ref{fig: def of broken ray}). More specifically, the broken ray $BR(\beta, d)$ is defined as follows:
\begin{align}\label{eq:definition of BR(beta,d)}
    BR(\beta,d) =  \left\{\vx_\beta + t \vuu_\beta: 0\leq t \leq d\right\}  \cup \left\{\vx_\beta + d \vuu_\beta + s \vv_\beta: 0 \leq s < \infty\right\}.
\end{align}
Note that the unit vectors $\vuu_\beta$ and $\vv_\beta$ are fixed and uniquely defined for a given $(\beta, d)$, which are used in the upcoming definitions.
\begin{figure}[H]
\centering
\begin{subfigure}[c]{0.48\textwidth}
\centering
\includegraphics[width=\textwidth]{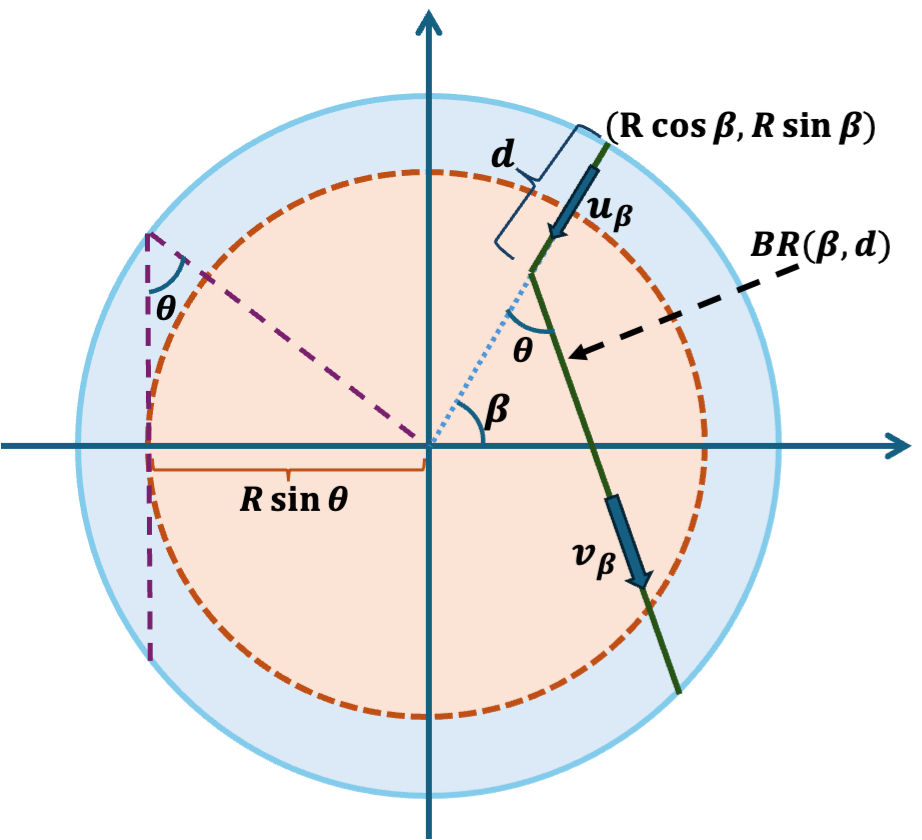}\caption{A sketch of broken line $BR(\beta, d)$.}\label{fig: def of broken ray}
\end{subfigure}
\hfill
\begin{subfigure}[c]{0.48\textwidth}
\centering
\includegraphics[width=\textwidth]{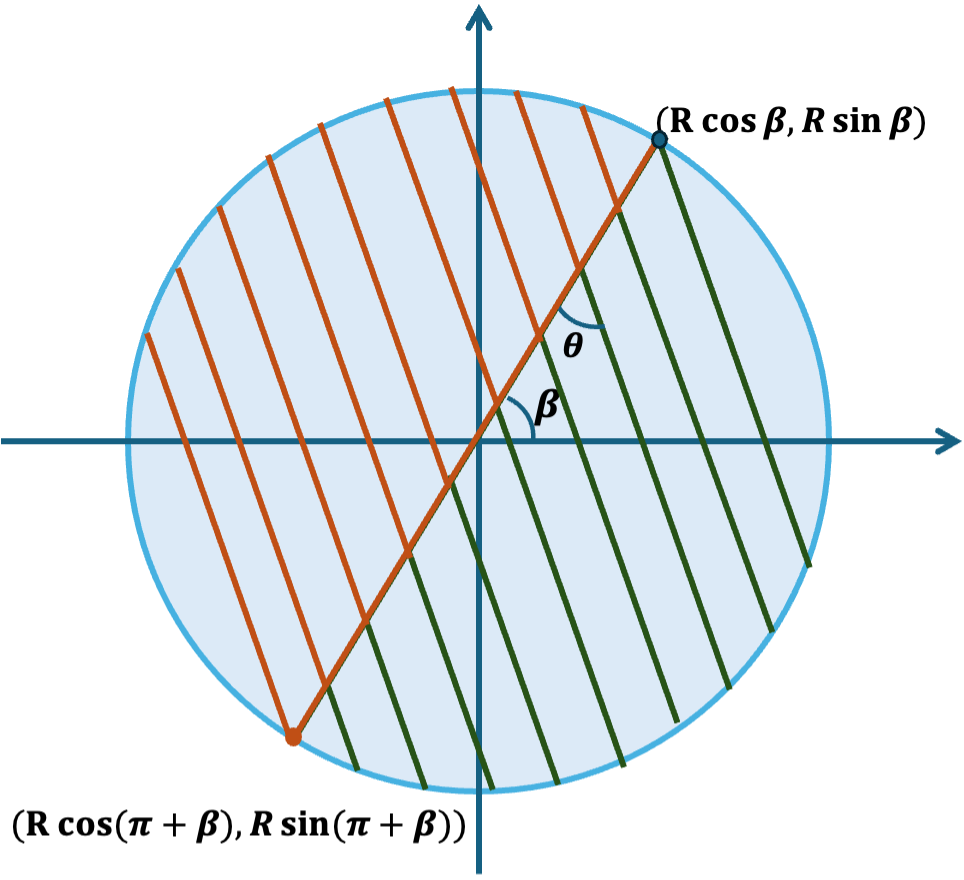}\caption{Broken lines considered to collect data.}\label{fig: data is taken over}
\end{subfigure}
\end{figure}
\begin{definition}\label{def:longitudinal v-line transform}
Let $\vf$ be a vector field with components $f_i \in C_0^\infty(\DR)$ for $ i = 1, 2$. The \textbf{longitudinal V-line transform} of $\vf$ is defined by
\begin{equation}\label{eq:def LVT}
\mathcal{L}\vf (\beta,d) := \int\limits_{0}^{d}\vuu_\beta \cdot\vf (\vx_\beta +s\vuu_\beta)\,ds + \int\limits_{0}^{\infty} \vv_\beta\cdot\vf(\vx_\beta + d\vuu_\beta + s\vv_\beta)\,ds, \quad \beta \in [0, 2\pi) \mbox { and } d \in [0,2R].
\end{equation}
\end{definition}
\begin{definition}\label{def:transverse v-line transform}
Let $\vf$ be a vector field with components $f_i \in C_0^\infty(\DR)$ for $ i = 1, 2$. The \textbf{transverse V-line transform} of $\vf$ is defined by
\begin{equation}\label{eq:def TVT}
\mathcal{T}\vf (\beta,d) := \int\limits_{0}^{d}\vuu_\beta^\perp \cdot\vf (\vx_\beta +s\vuu_\beta)\,ds + \int\limits_{0}^{\infty} \vv_\beta^\perp \cdot\vf(\vx_\beta + d\vuu_\beta + s\vv_\beta)\,ds, \quad \beta \in [0, 2\pi) \mbox { and } d \in [0,2R].
\end{equation}
For $\vuu_\beta =  -(\cos \beta, \sin \beta)$, we use the notation $\vuu^\perp = (\sin \beta, - \cos \beta)$. 
\end{definition}
\noindent Next, we define the straight-line version of these two transforms, which will be used later in the article. For given $\psi \in [0, 2\pi)$ and $p \in \mathbb{R}$, let $L(\psi,p):= \{(x_1,x_2): x_1 \cos{\psi} +x_2 \sin{\psi} = p\} $ be the line at a signed distance $p$ from the origin and normal to the unit vector $\vw = (\cos{\psi},\sin{\psi})$, and $\vw^\perp = (-\sin{\psi},\cos{\psi}) $ is a unit vector in  $\mathbb{R}^2$.  
\begin{definition}\label{def:longitudinal ray transform}
Let $\vf$ be a vector field with components $f_i \in C_0^\infty(\DR)$ for $ i = 1, 2$. The \textbf{longitudinal ray transform} of $\vf$ is defined by
\begin{equation}\label{eq:def LRT}
\mathcal{I}\vf (\psi,p) = \mathcal{I}\vf (\vw,p) := \int_\mathbb{R} \vw^\perp \cdot\vf (p \vw + s\vw^\perp)\,ds,  \quad \psi \in [0, 2\pi) \mbox { and } p \in \mathbb{R}.
\end{equation}
\end{definition}
\begin{definition}\label{def:transverse ray transform}
Let $\vf$ be a vector field with components $f_i \in C_0^\infty(\DR)$ for $ i = 1, 2$. The \textbf{transverse ray transform} of $\vf$ is defined by
\begin{equation}\label{eq:def TRT}
\mathcal{J}\vf (\psi,p) = \mathcal{J}\vf (\vw,p) := \int_\mathbb{R} \vw \cdot\vf (p \vw + s\vw^\perp)\,ds, \quad \psi \in [0, 2\pi) \mbox { and } p \in \mathbb{R}.
\end{equation}
\end{definition}
\begin{remark}
 The following identities are easy to verify:
$$ \Tc \vf =  - \Lc \vf^\perp \quad \mbox{ and } \quad \Jc \vf =  - \Ic \vf^\perp.$$
\end{remark}      
\noindent Finally, we also need the Radon transform and its inversion at a later stage, which we present now:
\begin{definition}\label{def:the Radon transform}
Let $f$ be a scalar function field in  $C_0^\infty(\DR)$. The \textbf{Radon transform} of $f$ is defined as follows
\begin{equation}\label{eq:def Radon transform}
\Rc f (\psi,p) = \Rc f (\vw,p) := \int_\mathbb{R} f (p \vw + s\vw^\perp)\,ds, \quad \psi \in [0, 2\pi) \mbox { and } p \in \mathbb{R}.
\end{equation}
\end{definition}
\noindent It is well-known that $f$ can be uniquely recovered from the knowledge of its Radon transform with the following explicit formula:
\begin{align}\label{eq:Radon's inversion formula}
    f(\vx) = \frac{1}{2\pi} \left(-\Delta\right)^{1/2} \int_0^{2\pi}\Rc f((\cos \alpha, \sin \alpha), x \cos \alpha + y \sin \alpha) d\alpha. 
\end{align}
\section{Main results}\label{sec:main results}
In this section, we present the main findings of this article. Both theorems presented here provide a method for reconstructing a vector field $\vf$, using the information of its longitudinal V-line transform and transverse V-line transform. These reconstruction methods are described in the proofs of the corresponding theorems; see equation \eqref{eq: Radon transforms of components} (which gives componentwise Radon transform of the unknown vector fields $\vf$) and equations \eqref{eq: Fourier coefficient of f1}, \eqref{eq: Fourier coefficient of f2} (which provide explicit formulas for Fourier coefficients of the components of $\vf$).
\begin{theorem}\label{th:full data recovery}
Let $\vf$ be a vector field with components in $C^{\infty}_0(\mathbb{R}^2)$ which are supported in $\mathbb{D}_{R\sin{\theta}}$. Then $\vf$ is uniquely determined from the knowledge of its  $\Lc \vf (\beta, d)$ and $\Tc \vf (\beta, d)$, for $d \in [0, 2R]$ and $\beta \in [0, 2\pi)$. 
\end{theorem}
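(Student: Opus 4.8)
The plan is to reduce the inversion to that of the classical Radon transform, via three observations: a differentiation identity that removes the incoming leg, a pairing of broken rays that rebuilds full straight lines out of the outgoing legs, and finally passage to the componentwise Radon transforms of $\vf$ followed by the inversion formula \eqref{eq:Radon's inversion formula}. For the first observation, fix $\beta$ and split $\Lc\vf(\beta,d)$ into its incoming-leg part $I_1(\beta,d)$ and its outgoing-leg part $h(\beta,d)=\int_0^\infty\vv_\beta\cdot\vf(\vx_\beta+d\vuu_\beta+s\vv_\beta)\,ds$; both are smooth in $d$. Differentiating in $d$, the incoming leg contributes only the boundary value $\vuu_\beta\cdot\vf(\vx_\beta+d\vuu_\beta)$, and in $\partial_d h$ I would move the derivative inside, write $\vuu_\beta=\cos\theta\,\vv_\beta-\sin\theta\,\vv_\beta^\perp$, use $\int_0^\infty(\vv_\beta\cdot\nabla)(\vv_\beta\cdot\vf)\,ds=-\vv_\beta\cdot\vf(\vx_\beta+d\vuu_\beta)$, and apply the pointwise identity $(\vv_\beta\cdot\nabla)(\vv_\beta^\perp\cdot\vf)-(\vv_\beta^\perp\cdot\nabla)(\vv_\beta\cdot\vf)=\delta^\perp\vf$, integrated along the outgoing leg, to the leftover term. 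The pointwise contributions cancel since $\vuu_\beta-\cos\theta\,\vv_\beta+\sin\theta\,\vv_\beta^\perp=0$, leaving
\begin{equation*}
\partial_d\Lc\vf(\beta,d)=\sin\theta\int_0^\infty\delta^\perp\vf\,(\vx_\beta+d\vuu_\beta+s\vv_\beta)\,ds,
\end{equation*}
and, applying the same computation to $\vf^\perp$ together with $\Tc\vf=-\Lc\vf^\perp$ and $\delta^\perp(\vf^\perp)=\delta\vf$,
\begin{equation*}
\partial_d\Tc\vf(\beta,d)=-\sin\theta\int_0^\infty\delta\vf\,(\vx_\beta+d\vuu_\beta+s\vv_\beta)\,ds.
\end{equation*}
Thus the $d$-derivatives of the data are half-line integrals, along the outgoing legs, of the \emph{scalar} fields $\delta^\perp\vf$ and $\delta\vf$, which are again supported in $\mathbb{D}_{R\sin\theta}$.

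\textbf{Rebuilding the Radon transform.} A direct computation shows that the line carrying the outgoing leg of $BR(\beta,d)$ lies at distance $|R-d|\sin\theta$ from the origin (with direction $\vv_\beta$), so as $(\beta,d)$ ranges over $[0,2\pi)\times[0,2R]$ these lines are exactly all lines meeting $\overline{\mathbb{D}_{R\sin\theta}}\supseteq\operatorname{supp}\vf$. The crucial point is that $BR(\beta,d)$ and $BR(\beta+\pi,2R-d)$ have the same break point $\vx_\beta+d\vuu_\beta$ and opposite outgoing directions $\vv_{\beta+\pi}=-\vv_\beta$, so their outgoing legs are the two half-lines into which that break point splits the common line; adding the two half-line integrals from the previous step therefore reconstitutes the full line integral. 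Explicitly, for $|p|\le R\sin\theta$ the line $L(\psi,p)$ is the outgoing-leg line of $BR(\beta,d)$ with $\beta=\psi-\tfrac\pi2-\theta$ and $d=R+p/\sin\theta\in[0,2R]$, and one obtains
\begin{equation*}
\Rc(\delta^\perp\vf)(\psi,p)=\frac1{\sin\theta}\Big(\partial_d\Lc\vf(\beta,d)+(\partial_d\Lc\vf)(\beta+\pi,2R-d)\Big),
\end{equation*}
together with the analogous formula for $\Rc(\delta\vf)$ in terms of $\Tc\vf$. Since $\delta^\perp\vf,\delta\vf\in C_0^\infty(\mathbb{D}_{R\sin\theta})$, their Radon transforms vanish for $|p|>R\sin\theta$, so this recovers $\Rc(\delta^\perp\vf)$ and $\Rc(\delta\vf)$ on all of $[0,2\pi)\times\mathbb{R}$.

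\textbf{Componentwise Radon transforms and inversion.} Using $\Rc(\partial_1 g)=\cos\psi\,\partial_p\Rc g$ and $\Rc(\partial_2 g)=\sin\psi\,\partial_p\Rc g$ and the definitions of $\delta,\delta^\perp$, the quantities recovered above satisfy the invertible (rotation) linear system
\begin{equation*}
\Rc(\delta\vf)=\cos\psi\,\partial_p\Rc f_1+\sin\psi\,\partial_p\Rc f_2,\qquad \Rc(\delta^\perp\vf)=-\sin\psi\,\partial_p\Rc f_1+\cos\psi\,\partial_p\Rc f_2 .
\end{equation*}
Solving for $\partial_p\Rc f_1,\partial_p\Rc f_2$ and integrating in $p$ — the integration constant being fixed by $\Rc f_i(\psi,p)=0$ for $|p|>R\sin\theta$, consistently because $\int_{\mathbb{R}^2}\delta\vf=\int_{\mathbb{R}^2}\delta^\perp\vf=0$ — yields explicit formulas for $\Rc f_1$ and $\Rc f_2$ in terms of $\Lc\vf$ and $\Tc\vf$. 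Inverting the Radon transform componentwise by \eqref{eq:Radon's inversion formula} then recovers $f_1,f_2$, hence $\vf$; uniqueness follows since every step is a deterministic operation on the given data.

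\textbf{Where the difficulty lies.} The computational core is the differentiation identity of the first step (the integration by parts with the two-dimensional curl identity, and the exact cancellation of the boundary terms). The conceptual core — and the only place the support condition $\operatorname{supp}\vf\subset\mathbb{D}_{R\sin\theta}$ is genuinely used — is the second step: one must verify that the outgoing legs sweep out every line meeting the support, that the partner parameters $(\beta+\pi,2R-d)$ stay admissible (i.e.\ $d\in[0,2R]$), and that the two outgoing half-lines glue to the full line with matching orientation. I expect this geometric bookkeeping to be the main obstacle. Note, incidentally, that pairing the \emph{undifferentiated} transforms would fail, since the longitudinal integrand $\vv_\beta\cdot\vf$ changes sign under $\beta\mapsto\beta+\pi$ and the two outgoing half-line integrals would then cancel rather than add; this is precisely why the differentiation — which trades the directed integrand for the scalar $\delta^\perp\vf$ — is needed.
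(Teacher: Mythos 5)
Your argument is correct, but it takes a genuinely different route from the paper. You differentiate the data in $d$, use the identities $(\vv_\beta\cdot\nabla)(\vv_\beta^\perp\cdot\vf)-(\vv_\beta^\perp\cdot\nabla)(\vv_\beta\cdot\vf)=\delta^\perp\vf$ and $\vuu_\beta=\cos\theta\,\vv_\beta-\sin\theta\,\vv_\beta^\perp$ to turn $\partial_d\Lc\vf$ and $\partial_d\Tc\vf$ into half-line integrals of the scalars $\delta^\perp\vf$ and $\delta\vf$, glue the pair $(\beta,d)$ and $(\beta+\pi,2R-d)$ into $\Rc(\delta^\perp\vf)$ and $\Rc(\delta\vf)$, and then pass to $\partial_p\Rc f_i$ and integrate in $p$ before inverting the Radon transform; I checked the cancellation of the vertex terms and the parameter bookkeeping ($\beta=\psi-\pi/2-\theta$, $d=R+p/\sin\theta$), and they are right. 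The paper instead works with the undifferentiated transforms: it forms the difference $\Lc\vf(\beta,d)-\Lc\vf(\beta+\pi,2R-d)$, in which the sign flip $\vv_{\beta+\pi}=-\vv_\beta$ makes the two outgoing half-line integrals add coherently to the full longitudinal ray transform $\Ic\vf(\psi_\beta,t_d)$, while the two incoming legs combine into the full diameter integral, which is itself data, namely $\Lc\vf(\beta,2R)=\Ic\vf(\beta+\pi/2,0)$; the analogous identity for $\Tc\vf$ gives $\Jc\vf$, and a pointwise $2\times 2$ rotation system then yields $\Rc f_1,\Rc f_2$ directly, with no differentiation of the data and no subsequent integration in $p$. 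This also shows that your incidental remark is mistaken: pairing the undifferentiated transforms does not fail --- one takes the difference rather than the sum, and the leftover diameter term is known --- so the differentiation step is a choice, not a necessity. What your route buys is that the intermediate objects are the classical Radon data of the scalar fields $\delta\vf$ and $\delta^\perp\vf$ (the natural ``reconstructible'' parts of a compactly supported field), at the cost of differentiating the measured data in $d$ and an extra integration in $p$; the paper's route is shorter, stays at the level of the field itself, and is somewhat more robust since it never differentiates the data. Both arguments use the same geometric pairing and need the support condition $\operatorname{supp}\vf\subset\mathbb{D}_{R\sin\theta}$ for exactly the reason you identify: the outgoing legs only sweep out lines within distance $R\sin\theta$ of the origin.
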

\noindent Note that in this theorem, there is a restriction on the support of $\vf$, which depends on the fixed scattering angle $\theta$. This support condition is coming due to the technique we are using to prove this theorem. This theorem is proved by generating the straight-line transforms by combining the given V-line transform data in a particular way. It is clear from Figure \ref{fig: data is taken over} that straight-line transform can not be generated for lines outside the disk of radius $R\sin \theta$. This technique was introduced by Ambartsoumian \cite{Ambarsoumian_2012}, where he considered the same problem for the scalar functions. 
\begin{theorem}\label{th:partial data recovery}
Let $\vf \in C_0^\infty(S^{1}(\DR))$. Then $\vf$ is uniquely recovered from $\Lc \vf (\beta, d)$ and $\Tc \vf (\beta, d)$ which are known for $d \in [0, R]$ and $\beta \in [0, 2\pi)$. 
\end{theorem}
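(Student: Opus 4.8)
\emph{Sketch of the approach.} I would prove this by a circular--harmonic reduction that first decouples the two Cartesian components of $\vf$ and then, within each scalar problem, produces a one--parameter family of one--dimensional integral equations in the radial variable that are inverted explicitly. The decisive first step is to form the complex combinations $\mathcal{L}\vf\pm i\,\mathcal{T}\vf$ and to pass to the ``circular components'' $f_\pm:=f_1\pm i f_2$. A direct computation with $\vuu_\beta=-(\cos\beta,\sin\beta)$, $\vuu_\beta^\perp=(\sin\beta,-\cos\beta)$ gives $\vuu_\beta\cdot\vf+i\,\vuu_\beta^\perp\cdot\vf=-e^{-i\beta}f_+$, and likewise $\vv_\beta\cdot\vf+i\,\vv_\beta^\perp\cdot\vf=-e^{-i(\beta+\theta)}f_+$, so that
\[
(\mathcal{L}+i\,\mathcal{T})\vf(\beta,d)=-\int_0^d e^{-i\beta}f_+(\vx_\beta+s\vuu_\beta)\,ds-\int_0^\infty e^{-i(\beta+\theta)}f_+\bigl(\vx_\beta+d\vuu_\beta+s\vv_\beta\bigr)\,ds,
\]
which involves $f_+$ only, while $(\mathcal{L}-i\,\mathcal{T})\vf$ involves $f_-$ only. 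Since $f_+$ and $f_-$ together determine $\vf$, it suffices to reconstruct a single complex scalar function supported in $\mathbb{D}_R$ from data of this mixed ``radial segment $+$ divergent ray'' type.

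Next I would write the vertex as $V_{\beta,d}=(R-d)(\cos\beta,\sin\beta)$ and note that, after rotating the plane by $-\beta$, the running point $V_{\beta,d}+s\vv_\beta$ of the broken ray becomes $\bigl((R-d)-s\cos\theta,\,-s\sin\theta\bigr)$, independent of $\beta$; hence its polar coordinates are $\bigl(r(s,d),\,\beta+\phi_0(s,d)\bigr)$ with $r(s,d)^2=(R-d)^2+s^2-2(R-d)s\cos\theta$ and $e^{i\phi_0(s,d)}=\bigl((R-d)-se^{i\theta}\bigr)/r(s,d)$. Expanding $f_+(r,\phi)=\sum_n c_n(r)e^{in\phi}$ and extracting the $n$-th Fourier coefficient in $\beta$ of $e^{i\beta}(\mathcal{L}+i\,\mathcal{T})\vf(\beta,d)$ then yields, for each integer $n$ separately, the one--dimensional equation
\[
G_n(d)=-\int_{R-d}^{R}c_n(r)\,dr-e^{-i\theta}\int_0^\infty c_n\!\bigl(r(s,d)\bigr)\Bigl(\tfrac{(R-d)-se^{i\theta}}{r(s,d)}\Bigr)^{\! n}ds,
\]
where $G_n(d)$ is the known $n$-th Fourier coefficient of the data. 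Splitting $s\mapsto r(s,d)$ at its minimum $s=(R-d)\cos\theta$ (where $r=(R-d)\sin\theta$) and changing variables converts the second term into $\int_{(R-d)\sin\theta}^{R}c_n(r)\,\kappa_n(r,d)\,dr$ with an explicit Cormack--type kernel $\kappa_n$ carrying a square--root (Abel) singularity from the Jacobian. Differentiating once in $d$ produces the pointwise term $-c_n(R-d)$ from the radial segment, and the remaining equation is a Volterra/Abel equation that one inverts by the standard fractional--integration formula; this gives closed forms for $c_n(r)$, hence for the Fourier coefficients of $f_1,f_2$, and reassembling recovers $\vf$. The range $d\in[0,R]$ is exactly what is needed: as $(\beta,d)$ runs over $[0,2\pi)\times[0,R]$ the vertex $V_{\beta,d}$ sweeps $\overline{\mathbb{D}_R}$ once, with each interior point $\vx$ attained for $\beta=\arg\vx$, $d=R-|\vx|$.

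The main obstacle is the broken--ray term: one must justify the term--by--term Fourier extraction and the change of variables with care near $s=(R-d)\cos\theta$, compute $\kappa_n$ explicitly and isolate its Abel part so the $d$--differentiated equation is genuinely invertible, and verify injectivity of the resulting operator on compactly supported radial profiles so that the reconstruction is unambiguous. In contrast to Theorem~\ref{th:full data recovery}, no support restriction on $\vf$ is required here, because we never reduce to a genuine straight--line Radon transform but invert the V--line data directly mode by mode; the kernels $\kappa_n$ are automatically supported in the geometrically accessible range $r\in\bigl[(R-d)\sin\theta,R\bigr]$ inside $\mathbb{D}_R$, and the scattering angle $\theta$ enters only through these kernels.
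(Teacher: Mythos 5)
Your opening reduction is sound and in fact coincides with the paper's: forming $(\Lc\pm i\,\Tc)\vf$ and the circular components $f_\pm=f_1\pm i f_2$ is exactly the combination the paper performs at the level of Fourier coefficients ($p_n\pm i q_n$ isolating $a_n\pm i b_n$), and your mode-by-mode equation for $G_n(d)$ agrees with \eqref{eq:relation between an, pn, and qn}--\eqref{eq:relation between bn, pn, and qn} after the substitution $t=R-d$. The geometry (vertex at $(R-d)(\cos\beta,\sin\beta)$, $r(s,d)^2=(R-d)^2+s^2-2(R-d)s\cos\theta$, the square-root Jacobian) is also correct.

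The gap is in how you propose to invert the resulting radial equation. After the change of variables $s\mapsto r$, the broken-ray term involves $c_n(r)$ for \emph{all} $r\in[(R-d)\sin\theta,\,R]$: the ray dips from $r=t:=R-d$ down to its perigee $t\sin\theta$ and then back out to $R$, so the unknown enters at radii both below and above $t$. Hence the equation obtained after differentiating in $d$ has the form $c_n(t)+\int_{t\sin\theta}^{R}c_n(r)\,\tilde\kappa_n(r,t)\,dr=\text{known}(t)$, a Fredholm equation of the second kind whose integration range straddles the diagonal; it is \emph{not} a Volterra/Abel equation, and ``the standard fractional-integration formula'' does not invert it. (Moreover the differentiation in $d$ is not legitimate as stated: the kernel has a $1/\sqrt{r^2-t^2\sin^2\theta}$ singularity at the moving endpoint $r=t\sin\theta$, so the formal boundary term is infinite and the naively differentiated kernel is non-integrable.) The injectivity you defer to ``verify'' at the end is precisely the content of the theorem, and your outline provides no mechanism for it. The paper's mechanism is the observation you miss: the broken-ray kernel is scale invariant, depending only on the ratio $t/r$, so each mode satisfies a multiplicative (Mellin) convolution equation $\int_t^R a_n(r)\,dr+\{[r\,a_n]\times h_n\}(t)=\text{data}$, and the Mellin transform converts it into the algebraic relation $\Mc a_n(s+1)\bigl[\tfrac1s+\Mc h_n(s)\bigr]=\text{data}$, solved by division and inverse Mellin transform (Theorem \ref{th: Melin transforms for Fourier coefficients} and formulas \eqref{eq: Fourier coefficient of f1}--\eqref{eq: Fourier coefficient of f2}). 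To salvage your route you would have to either exploit this ratio structure explicitly or prove injectivity of your Fredholm operator by an independent argument; as written, the decisive inversion step is missing.
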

\noindent This theorem is more general in the sense that there is no restriction on the support of $\vf$ and the considering less data here in the sense that the scalar $d$ is varying in the half interval $[0, R]$ instead on $[0, 2R]$. Ambartsoumian and Moon have studied the same problem for the scalar field case in \cite{Ambartsoumian_2013}. The idea behind this theorem is to expand the data ($\Lc \vf \ \&\ \Tc\vf$) and the unknown vector field $\vf$ into their Fourier series and then try to express the Fourier coefficients of $\vf$ in terms of Fourier coefficients of $\Lc \vf$ and $\Tc\vf$.   
\section{Proof of Theorem \ref{th:full data recovery}}\label{sec:full recovery th}
In this section, we prove that the knowledge of longitudinal and transverse V-line transform uniquely determines the unknown vector field $\vf$. 
\begin{proof}
As discussed previously, we extend $\vf$ by zero outside $\mathbb{D}_{R\sin{\theta}}$ and denote the extended vector field again by $\vf$. Since $\vf$ is zero outside of the disc $\DR$. We start by noting that if we consider $d =2R$, then 
\begin{align}\label{eq:relation between VLT and regular RT}
    \Lc \vf(\beta, 2R) =  \Ic \vf(\beta + \pi/2, 0).
\end{align}
Let us consider
\begin{align*}
 \Lc\vf(\beta,d) = \underbrace{\int\limits_{0}^{d}\vuu_\beta \cdot\vf (\vx_\beta +s\vuu_\beta)\,ds}_{I_1} + \underbrace{\int\limits_{0}^{\infty} \vv_\beta\cdot\vf(\vx_\beta + d\vuu_\beta + s\vv_\beta)\,ds,}_{I_2}
\end{align*}
and 
\begin{align*}
\Lc\vf(\beta + \pi,2R-d) &= \underbrace{\int\limits_{0}^{2R - d}\vuu_{\beta + \pi} \cdot\vf (\vx_{\beta + \pi}  +s\vuu_{\beta + \pi} )\,ds}_{J_1} \\
&\qquad + \underbrace{\int\limits_{0}^{\infty} \vv_{\beta + \pi} \cdot\vf(\vx_{\beta + \pi}  + (2R - d)\vuu_{\beta + \pi}  + s\vv_{\beta + \pi}) \,ds}_{J_2}.
\end{align*}
Let us first simplify the term $J_1$.
\begin{align*}
    J_1 &= \int\limits_{0}^{2R - d}\vuu_{\beta + \pi} \cdot\vf (\vx_{\beta + \pi}  +s\vuu_{\beta + \pi} )\,ds\\
    &= \int\limits_{0}^{2R - d}\vuu_{\beta + \pi} \cdot\vf (\vx_{\beta + \pi} +2R \vuu_{\beta + \pi} - 2R \vuu_{\beta + \pi}  + s\vuu_{\beta + \pi} )\,ds\\
     &= \int\limits_{0}^{2R - d}\vuu_{\beta + \pi} \cdot\vf (\vx_{\beta} + (s - 2R) \vuu_{\beta + \pi})\,ds, \quad \mbox{ since } \vx_{\beta + \pi} +2R \vuu_{\beta + \pi} =  \vx_\beta\\
    &= \int\limits_{-2R}^{- d}\vuu_{\beta + \pi} \cdot\vf (\vx_{\beta} + s\vuu_{\beta + \pi})\,ds \\
    &= -\int\limits_{-2R}^{- d}\vuu_{\beta} \cdot\vf (\vx_{\beta} - s\vuu_{\beta})\,ds, \quad \mbox{ since }  \vuu_{\beta + \pi} =  - \vuu_\beta\\
    &= -\int\limits_{d}^{2R}\vuu_{\beta} \cdot\vf (\vx_{\beta} + s\vuu_{\beta})\,ds.
\end{align*}
Then, we have 
\begin{align*}
    I_1 - J_1 &= \int\limits_{0}^{d}\vuu_\beta \cdot\vf (\vx_\beta +s\vuu_\beta)\,ds + \int\limits_{d}^{2R}\vuu_{\beta} \cdot\vf (\vx_{\beta} + s\vuu_{\beta})\,ds =  \int\limits_{0}^{2R}\vuu_{\beta} \cdot\vf (\vx_{\beta} + s\vuu_{\beta})\,ds\\
    &= \Ic \vf \left(\beta + \pi/2, 0\right) =  \Lc\vf(\beta, 2R). 
\end{align*}
Repeating a similar calculation, we have the following identity:
\begin{align*}
    I_2 - J_2 = \mathcal{I}\vf\left(\psi_\beta, t_d \right), \quad \mbox { with } \psi_{\beta} = \beta + \theta + \pi/2 \ \mbox { and } \  t_{d} = (R-d)\sin(\pi + \theta).
\end{align*}
Using these above relations, we get the following relation:
\begin{align*}
\Lc\vf(\beta,d) -  \Lc\vf(\beta + \pi,2R-d) = \mathcal{I}\vf({\beta + \pi/2,0}) + \mathcal{I}\vf{{(\psi_{\beta},t_{d})}}
\end{align*}
The above relation implies 
\begin{align}\label{eq:LRT in terms of LVT}
\Ic\vf{{(\psi_{\beta},t_{d})}} = \Lc\vf(\beta,d) -  \Lc\vf(\beta + \pi,2R-d) - \Lc \vf(\beta, 2R).
\end{align}
Following a similar line of arguments, we get an analogous relation for transverse ray transform, which is given as follows:
\begin{align}\label{eq:TRT in terms of TVT}
\Jc\vf(\psi_{\beta},t_{d}) = \Tc\vf(\beta,d) -  \Tc\vf(\beta + \pi,2R-d) - \Tc\vf({\beta + \pi/2,0}).
\end{align}
Please note that the right-hand side of the above relations \eqref{eq:LRT in terms of LVT}, \eqref{eq:TRT in terms of TVT} is completely known in terms of given data. The left-hand sides are the longitudinal/transverse ray transform of $\vf$ along the line defined by the parameter $(\psi_\beta, t_d)$. Therefore, by varying the parameter $(\psi_\beta, t_d)$, we get the longitudinal/transverse ray transform of $\vf$ along every line passing through $\mathbb{D}_{R\sin \theta}.$ Once we know both longitudinal and transverse ray transforms of $\vf$, we can recover $\vf$ explicitly as presented in \cite[Section 3.2]{Derevtsov_2015}. For the sake of completeness, we briefly discuss the steps here.

As discussed above, we know the longitudinal/transverse ray transform of $\vf$ and start by rewriting this data using the definitions of respective transform as follows:
\begin{align*}
 \Ic \vf(\psi_\beta, t_d) &= - \sin \psi_\beta\  \Rc f_1 (\psi_\beta, t_d) + \cos \psi_\beta \  \Rc f_2 (\psi_\beta, t_d)\\
  \Tc \vf(\psi_\beta, t_d) &=  \cos \psi_\beta\  \Rc f_1 (\psi_\beta, t_d) + \sin \psi_\beta \  \Rc f_2 (\psi_\beta, t_d).
\end{align*}
By solving this system of equations, we have 
\begin{equation}\label{eq: Radon transforms of components}  
\begin{aligned}
\Rc f_1 (\psi_\beta, t_d) &= - \sin \psi_\beta\ \Ic \vf(\psi_\beta, t_d) + \cos \psi_\beta \  \Tc \vf (\psi_\beta, t_d)\\
\Rc f_2(\psi_\beta, t_d) &=  \cos \psi_\beta\  \Ic \vf(\psi_\beta, t_d) + \sin \psi_\beta \    \Tc \vf(\psi_\beta, t_d).
\end{aligned}
\end{equation}
Hence, we get the componentwise Radon transform $\vf = (f_1, f_2)$ and therefore, by applying the inversion formula \eqref{eq:Radon's inversion formula}, we obtain the $\vf$ explicitly, which completes the proof of the theorem. 
\end{proof}
\section{Proof of the Theorem \ref{th:partial data recovery}}\label{sec:partial data case}
We begin this section with a quick introduction to Mellin transform and some of its properties. 
\begin{definition}[\cite{Flajolet_1995}]
Let $f$ be an integrable function that decays at infinity. Then the Mellin transform for $f$ is denoted by $\mathcal{M}f$ and is defined by 
    \begin{equation*}
      \mathcal{M}f(s) := \int\limits_{0}^{\infty} p^{s-1}f(p) \,dp
  \end{equation*}
\end{definition}  
Here are some basic properties of the Mellin transform, which are crucial in proving our Theorem \ref{th:partial data recovery}. Let $f$ be an integrable function that decays at infinity, and then the following identities hold:
\begin{enumerate}
\item $\displaystyle \mathcal{M}\left[r^k f(r)\right](s) = \mathcal{M}f(s+k)\label{p1}$ 
\item $\displaystyle \mathcal{M}\left[\int\limits_{t}^{\infty}f(x)\,dx\right](s) = \frac{\mathcal{M}f(s+1)}{s}\label{p2} $ 
\end{enumerate}
\begin{figure}[H]
\centering
\begin{subfigure}[c]{0.46\textwidth}
\centering
\includegraphics[width=\textwidth]{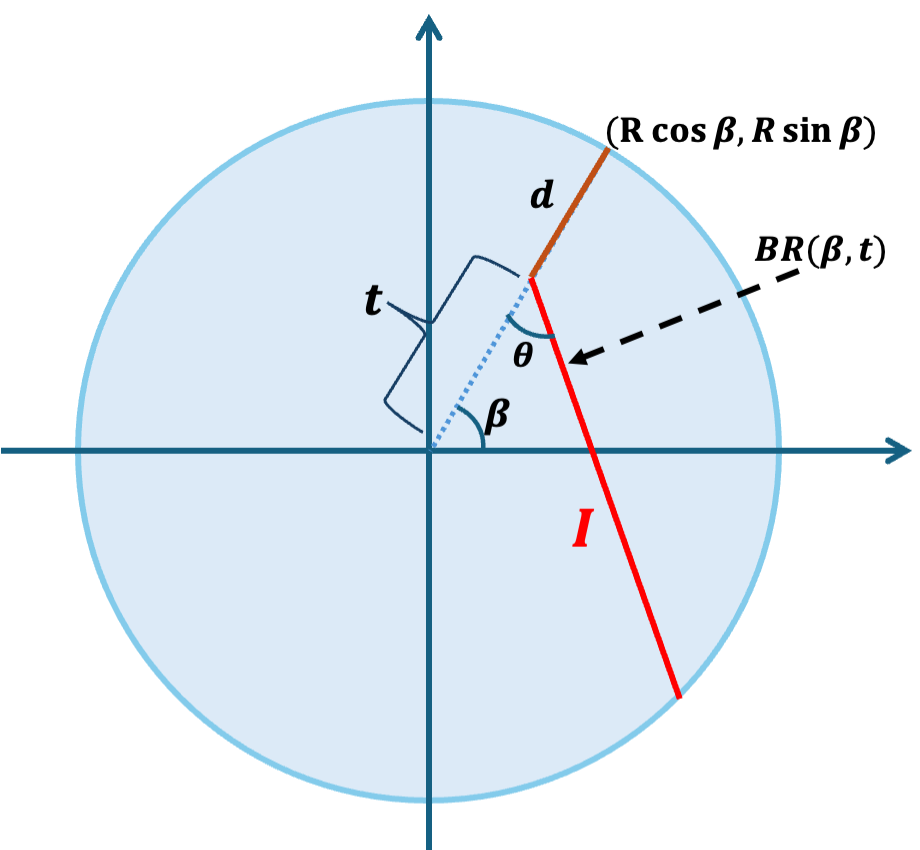}\caption{A sketch of modified parametrization of the broken line $B(\beta, t)$ with $ t = R- d$.}\label{fig: B(beta, t)}
\end{subfigure}
\hfill
\begin{subfigure}[c]{0.49\textwidth}
\centering
\includegraphics[width=\textwidth]{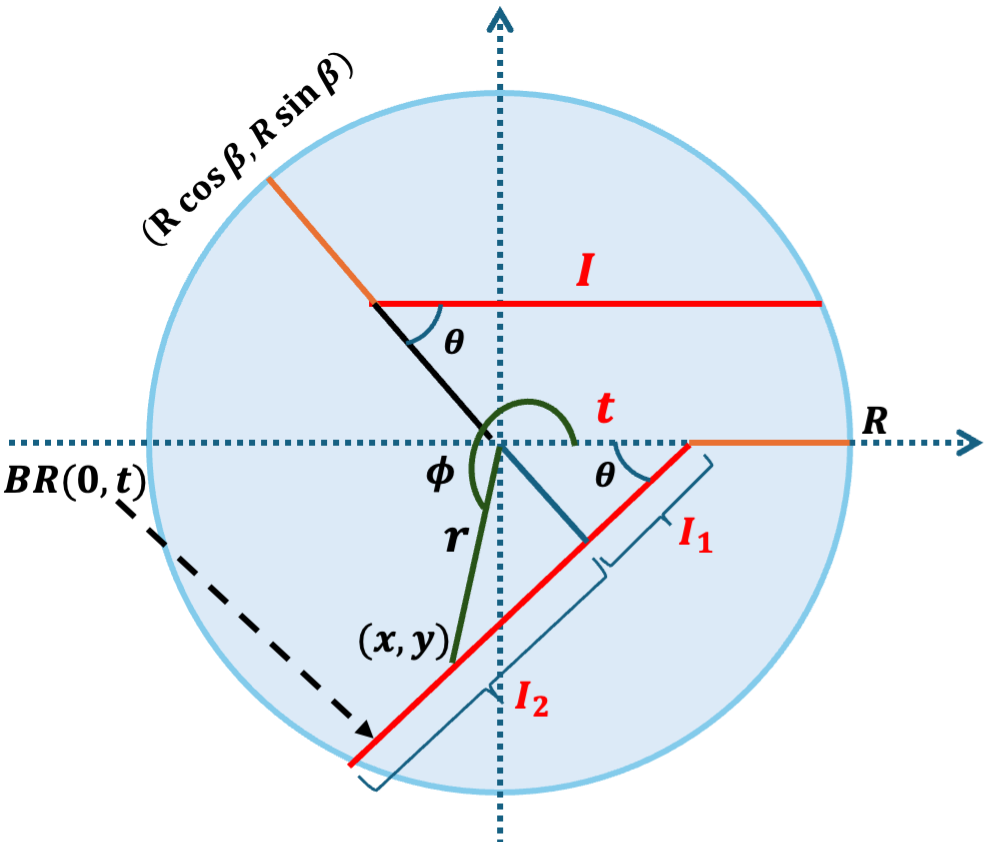}\caption{Here $B(0, t)$ is the broken line obtained by rotating $B(\beta, t)$ clockwise by an angle $\beta$.}\label{fig: polar coordinates}
\end{subfigure}
\end{figure}
To simplify the notations and upcoming calculations, we slightly change the parametrization of broken lines. Recall that the broken rays are defined using two parameters $\beta \in  [0, \pi)$ and {$d \in [0, R]$}, where $d$ is the distance traveled (along the diameter) by the ray before scattering. In this section, we change the parameter $d$ with a new parameter $t = R - d$; that is, from here on, the broken rays are parameterized by the ordered pair $(\beta,t)$, see Figure \ref{fig: B(beta, t)}.

Now, let us denote $p(\beta, t) := \mathcal{L}\vf(\beta, t)  $, $q(\beta, t) := \mathcal{T}\vf(\beta, t)  $ and let $\vf(\phi,r) := (f_1(\phi,r), f_2(\phi,r))$ be the unknown vector field in polar coordinates. Then the Fourier series of $f_1(\phi,r)$, $f_2(\phi,r)$, $q(\beta, t)$, and $p(\beta, t)$ concerning their angular variables with Fourier coefficients can be expressed as follows:
\begin{align}
     f_1(\phi,r) = \sum_{n=-\infty}^{\infty} a_n(r)e^{i n\phi}, \quad \mbox{ with } a_n(r) = \frac{1}{2\pi}\int\limits_{0}^{2\pi} f_1(\phi,r)e^{-in\phi}\,d\phi\label{eq:Fourier series of f1} \\
     f_2(\phi,r) = \sum_{n=-\infty}^{\infty} b_n(r)e^{i n\phi}, \ \quad \mbox{ with }  b_n(r) = \frac{1}{2\pi}\int\limits_{0}^{2\pi} f_2(\phi,r)e^{-in\phi}\,d\phi\label{eq:Fourier series of f2} \\
      p(\beta, t) = \sum_{n=-\infty}^{\infty} p_n(t) e^{in\beta}, \ \ \quad \mbox{ with } p_n(t) = \frac{1}{2\pi}\int\limits_{0}^{2\pi} p(\beta, t)e^{-in\beta}\,d\beta \label{eq:Fourier series of LVT}\\
    q(\beta, t) = \sum_{n=-\infty}^{\infty} q_n(t) e^{in\beta}, \ \ \quad \mbox{ with } q_n(t) = \frac{1}{2\pi}\int\limits_{0}^{2\pi} q(\beta, t)e^{-in\beta}\,d\beta \label{eq:Fourier series of TVT}.
\end{align}
Recall our aim is to recover $\vf$ from the knowledge of $\Lc \vf$ and $\Tc \vf$. The idea here is to express Fourier coefficients $a_n$ and $b_n$ in terms of Fourier coefficients $p_n$ and $q_n$. To achieve this, we first prove the Mellin transform $a_n$ and $b_n$ can be explicitly expressed in terms of the Mellin transform of $p_n$ and $q_n$. Finally, $a_n$ and $b_n$ are recovered by inverting the Mellin transform. 
\begin{theorem}\label{th: Melin transforms for Fourier coefficients}
For $n \in \mathbb{Z}$, let $a_n$ and $b_n$ be the Fourier coefficients (defined in \eqref{eq:Fourier series of f1} and \eqref{eq:Fourier series of f2}) of components of a vector field $\displaystyle \vf = (f_1, f_2) \in C_0^\infty\left(S^1(\DR)\right)$. Then, we have
\begin{equation*}
\mathcal{M}a_n(s) = \frac{-\Mc \left( p_{(n+1)} + p_{(n-1)}\right)(s-1) + i \Mc \left(q_{(n+1)} - q_{(n-1)}\right)(s-1) }{2 \left[\frac{1}{s-1} + \mathcal{M}h_n(s-1)\right]}, \hspace{.6cm} Re(s) > 1
\end{equation*} 
and 
\begin{equation*}
\mathcal{M}b_n(s) = \frac{-\Mc \left( q_{(n+1)} + q_{(n-1)}\right)(s-1) - i \Mc \left(p_{(n+1)} - p_{(n-1)}\right)(s-1) }{2 \left[\frac{1}{s-1} + \mathcal{M}h_n(s-1)\right]}, \hspace{.6cm} Re(s) > 1
\end{equation*} 
where $h_n$ is defined as follows
\begin{equation*}
h_{n}(t) = \begin{cases}
(-1)^n e^{i\theta} e^{in{\psi(t)}}\frac{1+t\cos[\psi(t)] + t^2 \sin{[\psi(t)]} \frac{\sin{\theta}}{\sqrt{1-t^2 \sin^2(\theta)}}}{\sqrt{1+t^2 + 2t\cos{[\psi(t)]}}} & : 0< t \leq 1\\
(-1)^n e^{i\theta}e^{in{\psi(t)}}\frac{1+t\cos[\psi(t)] + t^2 \sin{[\psi(t)]} \frac{\sin{\theta}}{\sqrt{1-t^2 \sin^2(\theta)}}}{\sqrt{1+t^2 + 2t\cos{[\psi(t)]}}}\\ 
\quad - e^{i\theta} e^{in[2\theta - {\psi(t)}]}\frac{1-t\cos[2\theta - \psi(t)] + t^2 \sin{[2\theta - \psi(t)]} \frac{\sin{\theta}}{\sqrt{1-t^2 \sin^2(\theta)}}}{\sqrt{1+t^2 - 2t\cos{[2\theta - \psi(t)]}}}   &  : 1<t < \frac{1}{\sin{\theta}}  \\
0 & : t > \frac{1}{\sin{\theta}}
\end{cases}
\end{equation*}
with $\psi(t) = \sin^{-1}{(t\sin{\theta})} + \theta$. 
\end{theorem}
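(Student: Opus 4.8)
The plan is to reduce the statement to a one-dimensional integral identity for each Fourier mode, and then apply the two listed properties of the Mellin transform. First I would fix $\beta=0$ without loss of generality, since rotating the disk by $-\beta$ reduces $B(\beta,t)$ to $B(0,t)$ and introduces only the phase factors $e^{in\beta}$ already visible in the Fourier expansions \eqref{eq:Fourier series of f1}--\eqref{eq:Fourier series of TVT}. With the new parametrization $t=R-d$, the broken line $B(0,t)$ has a first leg along (part of) the diameter and a second leg leaving the diameter at angle related to $\theta$. I would write out $\mathcal{L}\vf(0,t)$ and $\mathcal{T}\vf(0,t)$ in polar coordinates: the first integral runs over a radial segment of the disk, and the second integral, after substituting the straight-line parametrization of the scattered ray, becomes an integral against $f_1,f_2$ evaluated at polar radius depending on the arc-length parameter. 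The key geometric computation is to express, for the scattered leg, the polar radius $r$ and polar angle $\phi$ of the running point $\vx_0 + d\vuu_0 + s\vv_0$ as functions of $s$ and $t$; this is where $\psi(t)=\sin^{-1}(t\sin\theta)+\theta$ and the quantities $\sqrt{1+t^2\pm 2t\cos[\cdots]}$ and $\sqrt{1-t^2\sin^2\theta}$ enter, via the law of cosines in the triangle formed by the origin, the scattering vertex, and the running point.

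Next I would substitute the Fourier series for $f_1,f_2$ into these two scalar identities and integrate in the angular variable $\beta$ against $e^{-in\beta}$ to extract the $n$-th Fourier coefficients $p_n(t)$ and $q_n(t)$. Because $\vuu_\beta\cdot\vf$ and $\vv_\beta\cdot\vf$ are real linear combinations of $f_1$ and $f_2$ with coefficients $\cos,\sin$ of angles built from $\beta$ and $\theta$, the angular integration will couple the mode $n$ of $p,q$ to the modes $n\pm 1$ of $a,b$ — this is exactly why $p_{n+1}+p_{n-1}$, $q_{n+1}-q_{n-1}$, etc., appear. After this reduction, each $p_n(t)$ (resp.\ $q_n(t)$) is expressed as a sum of two one-dimensional integrals in the radial variable $r$: one over $r\in[t,R]$ coming from the first (diameter) leg, contributing the ``$\frac{1}{s-1}$'' term after taking the Mellin transform via property (2) (the integral $\int_t^\infty(\cdots)\,dr$ form), and one coming from the scattered leg, which after the change of variables from arc-length $s$ to polar radius $r$ takes the convolution-in-$t$ form $\int_t^{\cdots} a_n(r)\,h_n(r,t)\,(\cdots)\,dr$ with kernel matching the stated $h_n$. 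I would organize the bookkeeping so that the scaling $R\sin\theta$ is absorbed and the kernel depends on $r,t$ only through the ratio, so that the relevant integral operators are genuine Mellin convolutions.

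Then I would take the Mellin transform in $t$ of the extracted system. Using property (1) to handle the monomial weights $r^k$ that arise from the Jacobian of the $s\mapsto r$ substitution, and property (2) to handle the ``tail'' integral $\int_t^\infty$, the first-leg contribution turns into $\frac{1}{s-1}\,\mathcal{M}a_n(s)$ (up to the shift, hence the argument $s-1$ on the right and $s$ on the left), and the scattered-leg contribution turns into $\mathcal{M}h_n(s-1)\,\mathcal{M}a_n(s)$ by the multiplicativity of the Mellin transform under multiplicative convolution. Collecting terms gives a $2\times 2$ linear system for $\bigl(\mathcal{M}a_n(s),\mathcal{M}b_n(s)\bigr)$ whose coefficient matrix is $\bigl[\tfrac{1}{s-1}+\mathcal{M}h_n(s-1)\bigr]$ times a rotation-type matrix $\begin{pmatrix}1 & i\\ i & 1\end{pmatrix}$-style combination; solving it (the $2\times2$ inverse is elementary) yields precisely the two displayed formulas, with the $\mathrm{Re}(s)>1$ condition ensuring convergence of $\mathcal{M}$ on the compactly supported, smooth radial profiles.

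The main obstacle I expect is the geometric/trigonometric step: correctly identifying the polar coordinates $(r,\phi)$ of a point on the scattered leg as functions of the arc-length $s$ and the offset $t$, getting the Jacobian $ds/dr$ right, and verifying that after the angular Fourier integration the kernel assembles exactly into the piecewise expression for $h_n(t)$ — in particular accounting for the case split at $t=1$ (where the scattered ray begins to re-enter versus exit the disk) and at $t=1/\sin\theta$ (beyond which the broken line misses the support entirely), and tracking the sign $(-1)^n$ and phases $e^{i\theta}$, $e^{in\psi(t)}$, $e^{in[2\theta-\psi(t)]}$ coming from the two endpoints of the chord cut by the scattered ray. Once that kernel identification is pinned down, the Mellin manipulations and the linear-algebra solve are routine given properties (1) and (2).
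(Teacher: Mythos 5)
Your proposal follows essentially the same route as the paper: reduce to $\beta=0$ by rotation, expand in angular Fourier series (which couples mode $n$ of $p,q$ to modes $n\pm1$ of $a,b$), convert the scattered leg to a radial integral via the polar-coordinate/Jacobian computation that produces the kernel $h_n$ and the splits at $t=1$ and $t=1/\sin\theta$, recognize the Mellin-convolution structure, and apply the two Mellin properties to solve for $\mathcal{M}a_n$ and $\mathcal{M}b_n$. The only cosmetic difference is ordering: the paper decouples the components first by forming $p_n\pm iq_n$ (so that $a_n$ and $b_n$ each satisfy a single scalar relation with the same kernel) before the geometric substitution, whereas you solve the resulting $2\times2$ system after taking Mellin transforms — an equivalent bookkeeping choice.
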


\begin{proof} We start our analysis by first establishing relations between these Fourier coefficients $a_n$, $b_n$, $p_n$, and $q_n$, which will be later used in Theorem \ref{th: Melin transforms for Fourier coefficients}. Let $\Theta(\beta, t)$ be the unit vector field along the Broken-ray $BR(\beta,t)$. More specifically,  we have $\Theta(\beta, t) = \vuu_\beta = -(\cos \beta, \sin \beta)$ for the branch from the boundary $\partial \DR$ and $\Theta(\beta, t) = \vv_\beta = -(\cos (\theta + \beta), \sin (\theta +\beta))$. 
Consider
\begin{align*}
p(\beta,t) &=\Lc \vf(\beta,t)\\ &= \int\limits_{BR(\beta,t)} \vf(\phi,r)\cdot{\Theta}\,ds\\
&= \int\limits_{BR(0,t)} \vf\left(\phi+\beta,r \right)\cdot{\Theta}\,ds\\
&= \int_t^R  \vf\left(\beta,r \right)\cdot(-\cos{\beta},-\sin{\beta})\,dr  + \int_I \vf\left(\phi + \beta,r \right)\cdot(-\cos{(\beta + \theta)},-\sin{(\beta + \theta)})\,ds \\
&= \sum_{n=-\infty}^{\infty}  \int\limits_{t}^{R} \left(a_n(r),b_n(r)\right)\cdot (-\cos{\beta},-\sin{\beta})e^{in \beta}\,dr \\ 
& \quad + \sum_{n=-\infty}^{\infty}\int\limits_{I} \left(a_n(r), b_n(r)\right)\cdot(-\cos{(\beta+\theta)},-\sin{(\beta+\theta)}) {e}^{in(\phi + \beta)}\,ds.
\end{align*}
Here in the fourth line, we use the fact the first integral is along $x$-axis (hence $\phi= 0$ and area element is $dr$), and the second integral is along the other section of $B(0, t)$.  Now, using $\displaystyle \cos \theta = \frac{e^{i\theta} + e^{-i\theta}}{2}$ and  $\displaystyle \sin \theta = \frac{e^{i\theta} - e^{-i\theta}}{2i} $ in the above expression of $p(\beta, t)$, we have
\begin{align*}
-2 p(\beta, t) &=\sum_{n=-\infty}^{\infty}  \int\limits_{t}^{R} \left\{a_n(r)\left({e^{i\beta} + e^{-i\beta}}\right) - i b_n(r)\left({e^{i\beta} - e^{-i\beta}}\right)\right\}e^{in \beta}\,dr \\ 
& \quad + \sum_{n=-\infty}^{\infty}\int\limits_{I} \left\{a_n(r)\left({e^{i(\beta+ \theta)} + e^{-i(\beta + \theta)}}\right) - i b_n(r)\left({e^{i(\beta + \theta)} - e^{-i(\beta + \theta)}}\right) \right\}{e}^{in(\phi + \beta)}\,ds\\
&= \sum_{n=-\infty}^{\infty}\int\limits_{t}^{R} \left\{\left(a_n(r) - i b_n(r)\right)e^{i(n+1)\beta} +  \left(a_n(r) + i b_n(r)\right)e^{i(n-1)\beta}\right\}\,dr \\
& \quad + \sum_{n=-\infty}^{\infty}  \int\limits_{I} \left\{\left(a_n(r) - i b_n(r)\right)e^{in\phi}e^{i\theta}e^{i(n+1)\beta} + \left(a_n(r) + i b_n(r)\right)e^{in\phi}e^{i\theta}e^{i(n-1)\beta}\right\}\,ds\\
&= \sum_{n=-\infty}^{\infty}\int\limits_{t}^{R} \left\{\left(a_{n-1}(r) - i b_{n-1}(r)\right) +  \left(a_{n+1}(r) + i b_{n+1}(r)\right)\right\}e^{in\beta}\,dr \\
& \quad + \sum_{n=-\infty}^{\infty}  \int\limits_{I} \left\{\left(a_{n-1}(r) - i b_{n-1}(r)\right)e^{i(n-1)\phi} +  \left(a_{n+1}(r) + i b_{n+1}(r)\right)e^{i(n+1)\phi}\right\}e^{i\theta}e^{in\beta}\,ds.
\end{align*}
Now comparing the above relation with the $\displaystyle p(\beta, t)= \sum_{n=-\infty}^{\infty} p_n(t) e^{in\beta}$, we get 
 \begin{align}\label{eq:p_n}
-2 p_n(t)   &= \int\limits_{t}^{R} \left\{\left(a_{n-1}(r) - i b_{n-1}(r)\right) + \left(a_{n+1}(r) + i b_{n+1}(r)\right)\right\}\,dr \nonumber\\
& \quad + \int\limits_{I} \left\{\left(a_{n-1}(r) - i b_{n-1}(r)\right)e^{i(n-1)\phi} + \left(a_{n+1}(r) + i b_{n+1}(r)\right)e^{i(n+1)\phi}\right\}e^{i\theta}\,ds.
 \end{align}
Also
\begin{align}\label{eq5.4}
q(\beta,t) &=\mathcal{T} \vf(\beta,t)= -\mathcal{L} \vf^{\perp}(\beta,t) = -\int\limits_{BR(\beta,t)} \vf^{\perp}(\phi,r)\cdot \Theta\,ds = -\int\limits_{BR(0,t)} \vf^{\perp}\left(\phi+\beta,r \right)\cdot\Theta\,ds \nonumber\\ 
&= \sum_{n=-\infty}^{\infty}\int\limits_{t}^{R}  \left(b_n(r),-a_n(r)\right)\cdot (-\cos{\beta},-\sin{\beta})e^{in  \beta}\,dr \nonumber \\ 
& \qquad + \sum_{n=-\infty}^{\infty}\int\limits_{I} \left(b_n(r), -a_n(r)\right)\cdot(-\cos{(\beta+\theta)},-\sin{(\beta+\theta)}) {e}^{in(\phi + \beta)}\,ds \nonumber\\
-2q(\beta,t)&=  \sum_{n=-\infty}^{\infty} \int\limits_{t}^{R} \left\{ \left(b_{n-1}(r) + i a_{n-1}(r)\right) +  \left(b_{n+1}(r) - i a_{n+1}(r)\right)\right\}e^{in\beta}\,dr \nonumber\\
& \quad + \sum_{n=-\infty}^{\infty} \int\limits_{I} \left\{\left(b_{n-1}(r) + i a_{n-1}(r)\right)e^{i(n-1)\phi} + \left(b_{n+1}(r) - i a_{n+1}(r)\right)e^{i(n+1)\phi}\right\}e^{i\theta}e^{in\beta}\,ds.
\end{align}
Again comparing the above expression  with the Fourier series of $q(\beta, t)$, we get
\begin{align}\label{eq:q_n}
-2 q_n(t) &= \int\limits_{t}^{R} \left\{\left(b_{n-1}(r) + i a_{n-1}(r)\right) + \left(b_{n+1}(r) - i a_{n+1}(r)\right)\right\}\,dr \nonumber\\
& \quad + \int\limits_{I} \left\{\left(b_{n-1}(r) + i a_{n-1}(r)\right)e^{i(n-1)\phi} + \left(b_{n+1}(r) - i a_{n+1}(r)\right)e^{i(n+1)\phi}\right\}e^{i\theta}\,ds.
 \end{align}
Multiply equation \eqref{eq:q_n} with $i$ and add it to equation \eqref{eq:p_n} to obtain
\begin{align}\label{eq:p_n + iq_n}
- \left(p_n(t) + i q_n(t) \right) &= \int\limits_{t}^{R} \left(a_{n+1}(r) + i b_{n+1}(r)\right)\,dr + \int\limits_{I} \left(a_{n+1}(r) + i b_{n+1}(r)\right)e^{i(n+1)\phi}e^{i\theta}\,ds.
\end{align}
Now we multiply equation \eqref{eq:q_n} with $i$ and subtract from equation \eqref{eq:p_n} to get
\begin{align}\label{eq:p_n - iq_n}
- \left(p_n(t) - i q_n(t) \right) &= \int\limits_{t}^{R} \left(a_{n-1}(r) - i b_{n-1}(r)\right)\,dr  + \int\limits_{I} \left(a_{n-1}(r) - i b_{n-1}(r)\right)e^{i(n-1)\phi}e^{i\theta}\,ds.
\end{align}
Equations \eqref{eq:p_n + iq_n} and \eqref{eq:p_n - iq_n} can be further rewritten in the following form:
\begin{align}
- \left(p_{(n-1)}(t) + i q_{(n-1)}(t) \right) &= \int\limits_{t}^{R} \left(a_{n}(r) + i b_{n}(r)\right)\,dr + \int\limits_{I} \left(a_{n}(r) + i b_{n}(r)\right)e^{in\phi}e^{i\theta}\,ds
\end{align}
and
\begin{align}
- \left(p_{(n+1)}(t) - i q_{(n+1)}(t) \right) &= \int\limits_{t}^{R} \left(a_{n}(r) - i b_{n}(r)\right)\,dr  + \int\limits_{I} \left(a_{n}(r) - i b_{n}(r)\right)e^{in\phi}e^{i\theta}\,ds.
\end{align}
Further simplification of the above two equations gives
\begin{align}
\frac{- \left(p_{(n+1)}(t) + p_{(n-1)}(t) \right)+ i\left(q_{(n+1)}(t) - q_{(n-1)}(t) \right)}{2} = \int\limits_{t}^{R} a_{n}(r) \,dr + \int\limits_{I} a_{n}(r) e^{in\phi}e^{i\theta}\,ds \label{eq:relation between an, pn, and qn} \\
\frac{\left(p_{(n+1)}(t) - p_{(n-1)}(t) \right) - i\left(q_{(n+1)}(t) + q_{(n-1)}(t) \right)}{2i} =  \int\limits_{t}^{R} b_{n}(r) \,dr + \int\limits_{I} b_{n}(r) e^{in\phi}e^{i\theta}\,ds.\label{eq:relation between bn, pn, and qn} 
\end{align}
To further simplify \eqref{eq:relation between an, pn, and qn} and \eqref{eq:relation between bn, pn, and qn}, we divide the line segment $I$ in two parts $I_1$ and $I_2$, with $ I_1 := \{(x,y) : y = (x-t)\tan{\theta},~ t\sin^2{\theta} \leq x \leq t \}$ and  $ I_2 := \{(x,y) : y = (x-t)\tan{\theta},~ -\infty \leq x \leq t\sin^2{\theta} \}$ (see Figure \ref{fig: B(beta, t)}). The polar angles made by a point $(x,y)\in I$ is given by 
\begin{align*}
    \phi(r) = \left\{\begin{array}{cc}
     2 \theta -  \psi(t/r)  ,     & (x, y) \in I_1 \\
     \pi +   \psi(t/r) ,   & (x, y) \in I_2
    \end{array},\right. 
\end{align*}
where $\psi(t) = \sin^{-1}\left(t \sin \theta\right) +  \theta$.
The length measures $ds$ on $I$ is given by (please refer \cite[Theorem 2]{Ambartsoumian_2013} for details)
\begin{align*}
    ds = \frac{r - t\cos{\phi} + tr \frac{d\phi}{dr}\sin{\phi}}{\sqrt{{r}^2 + t^2 - 2{r} t \cos{\phi}}}\,dr.
\end{align*}
Hence we have
\begin{align*}
ds = \begin{cases}
\frac{r-t\cos[2\theta - \psi(\frac{t}{r})] + \frac{t^2}{r} \sin{[2\theta - \psi(\frac{t}{r})]} \frac{\sin{\theta}}{\sqrt{1-\frac{t^2}{r^2} \sin^2\theta}}}{\sqrt{r^2+t^2 - 2tr\cos{[2\theta - \psi(\frac{t}{r})]}}}dr,  & \quad \mbox{ for } (x,y) \in I_1\\
\frac{r - t\cos[\pi + \psi(\frac{t}{r})] - \frac{t^2}{r} \sin{[\pi + \psi(\frac{t}{r})]} \frac{\sin{\theta}}{\sqrt{1-\frac{t^2}{r^2} \sin^2\theta}}}{\sqrt{r^2+t^2 - 2tr\cos{[\pi + \psi(\frac{t}{r})]}}}dr, & \quad \mbox{ for } (x,y) \in I_2.
\end{cases}
\end{align*}
Substituting the value of $\phi$ and $ds$ into expression \eqref{eq:relation between an, pn, and qn}, we have
\begin{align*}
\frac{1}{2}&\left[- \left(p_{(n+1)}(t) + p_{(n-1)}(t) \right)+ i\left(q_{(n+1)}(t) - q_{(n-1)}(t) \right) \right]\\&= \int\limits_{t}^{R} a_n(r)\,dr + \int\limits_{t}^{t\sin{\theta}}a_n(r) e^{i\theta} e^{in[2\theta - {\psi(\frac{t}{r})}]}\frac{r-t\cos[2\theta - \psi(\frac{t}{r})] + \frac{t^2}{r} \sin{[2\theta - \psi(\frac{t}{r})]} \frac{\sin{\theta}}{\sqrt{1-\frac{t^2}{r^2} \sin^2\theta}}}{\sqrt{r^2+t^2 - 2tr\cos{[2\theta - \psi(\frac{t}{r})]}}}\,dr\\ & \quad + \int\limits_{t\sin{\theta}}^{\infty}a_n(r) e^{i\theta} e^{in[\pi + {\psi(\frac{t}{r})}]}\frac{r - t\cos[\pi + \psi(\frac{t}{r})] - \frac{t^2}{r} \sin{[\pi + \psi(\frac{t}{r})]} \frac{\sin{\theta}}{\sqrt{1-\frac{t^2}{r^2} \sin^2\theta}}}{\sqrt{r^2+t^2 - 2tr\cos{[\pi + \psi(\frac{t}{r})]}}}\,dr \\     
 &= \int\limits_{t}^{R} a_n(r)\,dr - \int\limits_{t\sin{\theta}}^{t}a_n(r) e^{i\theta} e^{in[2\theta - {\psi(\frac{t}{r})}]}\frac{1-\frac{t}{r}\cos[2\theta - \psi(\frac{t}{r})] + \frac{t^2}{r^2} \sin{[2\theta - \psi(\frac{t}{r})]} \frac{\sin{\theta}}{\sqrt{1-\frac{t^2}{r^2} \sin^2\theta}}}{\sqrt{1+\frac{t^2}{r^2} - 2\frac{t}{r}\cos{[2\theta - \psi(\frac{t}{r})]}}}\,dr\\ & \quad + (-1)^n\int\limits_{t\sin{\theta}}^{\infty}a_n(r) e^{i\theta} e^{in[{\psi(\frac{t}{r})}]}\frac{1 + \frac{t}{r}\cos[\psi(\frac{t}{r})] + \frac{t^2}{r^2} \sin{[\psi(\frac{t}{r})]} \frac{\sin{\theta}}{\sqrt{1-\frac{t^2}{r^2} \sin^2\theta}}}{\sqrt{1+\frac{t^2}{r^2} + 2\frac{t}{r}\cos{[\psi(\frac{t}{r})]}}}\,dr \\ 
&= \int\limits_{t}^{R} a_n(r)\,dr + \{[r a_n(r)]\times h_n\}(t) 
\end{align*}
where
\begin{equation*}
\{g\times h\}(s) = \int\limits_{0}^{\infty} g(r)h\left(\frac{s}{r}\right)\frac{dr}{r}.
\end{equation*}
Now taking the Mellin transform above on both sides and using the  properties \eqref{p1} and \eqref{p2} of the Mellin transform, we get
\begin{equation*}
-\Mc \left( p_{(n+1)} + p_{(n-1)}\right)(s) + i \Mc \left(q_{(n+1)} - q_{(n-1)}\right)(s)  = 2  \left[\frac{1}{s}\Mc a_n(s+1) + \Mc a_n(s+1)\Mc h_n(s)\right].
\end{equation*}
Hence  we get 
\begin{equation*}
\mathcal{M}a_n(s) = \frac{-\Mc \left( p_{(n+1)} + p_{(n-1)}\right)(s-1) + i \Mc \left(q_{(n+1)} - q_{(n-1)}\right)(s-1) }{2 \left[\frac{1}{s-1} + \mathcal{M}h_n(s-1)\right]}, \hspace{.6cm} Re(s) > 1.
\end{equation*} 
Following the same analysis, we also obtain the Mellin transform of $b_n$, which is given by 
\begin{equation*}
\mathcal{M}b_n(s) = \frac{-\Mc \left( q_{(n+1)} + q_{(n-1)}\right)(s-1) - i \Mc \left(p_{(n+1)} - p_{(n-1)}\right)(s-1) }{2 \left[\frac{1}{s-1} + \mathcal{M}h_n(s-1)\right]}, \hspace{.6cm} Re(s) > 1.
\end{equation*} 
This completes the proof of the Theorem \ref{th: Melin transforms for Fourier coefficients}.
\end{proof}
Finally, we conclude this section by taking the inverse of the Mellin transform to get the following expressions for $a_n$ and $b_n$:
    \begin{equation}\label{eq: Fourier coefficient of f1}
        a_n(r) = \lim_{T\to\infty} \frac{1}{2\pi i} \int\limits_{t-Ti}^{t+Ti} r^{-s} \frac{-\Mc \left( p_{(n+1)} + p_{(n-1)}\right)(s-1) + i \Mc \left(q_{(n+1)} - q_{(n-1)}\right)(s-1) }{2 \left[\frac{1}{s-1} + \mathcal{M}h_n(s-1)\right]}\,ds
    \end{equation}
    and
   \begin{equation}\label{eq: Fourier coefficient of f2}
        b_n(r) = \lim_{T\to\infty} \frac{1}{2\pi i} \int\limits_{t-Ti}^{t+Ti} r^{-s} \frac{-\Mc \left( q_{(n+1)} + q_{(n-1)}\right)(s-1) - i \Mc \left(p_{(n+1)} - p_{(n-1)}\right)(s-1) }{2 \left[\frac{1}{s-1} + \mathcal{M}h_n(s-1)\right]}\,ds,
    \end{equation}
which completes the proof of Theorem \ref{th:partial data recovery}.
\section*{Acknowledgements}\label{sec:acknowledgements}
RB acknowledges the support of UGC, the Government of India, with a research fellowship. RM was partially supported by SERB SRG grant No. SRG/2022/000947. 

\bibliographystyle{plain}
\bibliography{references}

\end{document}